\newtheorem{theorem}{Theorem}
\newtheorem{lemma}[theorem]{Lemma}
\newtheorem{observation}[theorem]{Observation}
\newcommand{\gauss}[3]{\genfrac{[}{]}{0pt}{}{#1}{#2}_{#3}}
\newcommand{\GF}{\mathbb{F}}
\newcommand{\PG}[2]{\operatorname{PG}(#1,#2)}
\begin{document}
\title[A subspace code of size $333$ in the setting of a binary Fano plane]{A subspace code of size $\mathbf{333}$ in the setting of a binary $\mathbf{q}$-analog of the Fano plane}
\date{}
\author{Daniel~Heinlein, Michael~Kiermaier, Sascha~Kurz, and Alfred~Wassermann$^\star$}
\thanks{$^\star$ All authors are with the Department of Mathematics, Physics, and Computer Science, University of Bayreuth, Bayreuth, GERMANY. 
  Email: firstname.lastname@uni-bayreuth.de
  The work was supported by the ICT COST Action IC1104 and grants KU 2430/3-1, WA 1666/9-1 -- {\lq\lq}Integer Linear Programming Models for 
  Subspace Codes 
  and Finite Geometry{\rq\rq} -- from the German Research Foundation.}

\begin{abstract}
We show that there is a binary subspace code of constant dimension 3 in ambient dimension 7, having minimum subspace distance 4 and cardinality 
333, i.e., $333 \le A_2(7,4;3)$, which improves the previous best known lower bound of 329. Moreover, if a code with these parameters has at 
least 333 elements, its automorphism group is in one of 31 conjugacy classes.

This is achieved by a more general technique for an exhaustive search in a finite group that does not depend on the enumeration of all subgroups.

\medskip
  
\noindent
  \textbf{Keywords:} Finite groups, finite projective spaces, constant dimension codes, subspace codes, subspace distance, combinatorics, computer search.\\
  \textbf{MSC:} 51E20; 05B07, 11T71, 94B25
\end{abstract}

\maketitle

\section{Introduction}
Since the seminal paper of K\"otter and Kschischang \cite{koetter2008coding} there is a still 
growing interest in subspace codes, which are sets of subspaces of the $\GF_q$-vector space $\GF_q^n$ 
together with a suitable metric. 
If all subspaces, which play the role of the codewords, have the same dimension, say $k$, then one speaks of 
constant dimension codes. The, arguably, most commonly used distance measures for subspace codes, motivated 
by an information-theoretic analysis of the Koetter-Kschischang-Silva model, see e.g.\ \cite{silva2008rank}, 
are the \emph{subspace distance} 
$$
  d_S(U,W):=\dim(U+W)-\dim(U\cap W)=2\cdot\dim(U+W)-\dim(U)-\dim(W)
$$ 
and the
\emph{injection distance} 
$$
  d_I(U,W):=\max\left\{\dim(U),\dim(W)\right\}-\dim(U\cap W),
$$ 
where $U$ and $W$ are subspaces of $\GF_q^n$. For constant dimension codes we have $d_S(U,W)=2d_I(U,W)$, so 
that the subsequent results are valid for both distance measures. By $A_q(n,d;k)$ we denote the maximum cardinality 
of a constant dimension code in $\GF_q^n$ with subspaces of dimension $k$ and minimum subspace distance $d$. From a 
mathematical point of view, one of the main problems of subspace coding is the determination of the exact value of 
$A_q(n,d;k)$ or the derivation of suitable bounds, at the very least. 

Currently, there are just a very few, but 
nevertheless very powerful, general construction methods available, see e.g.~\cite{MR2589964,heinlein2017asymptotic} 
for the details of the Echelon-Ferrers and the linkage construction. Besides that, 
several of the best known constant dimension codes for moderate parameters have been found by prescribing a subgroup 
of the automorphism group of the code, see e.g.~\cite{MR2796712}. However, the prescribed subgroups 
have to be chosen rather skillfully, since there are many possible choices and some groups turn out to permit only 
small codes. 

Here, we aim to develop a systematic approach, i.e., we want to check \emph{all} groups, exceeding some 
problem-dependent cardinality. For some fixed parameters $q$, $n$, $k$, and $d$ this is a finite problem -- in theory. 
As the problem for the exact determination of $A_q(n,d;k)$ is finite too, one quickly reaches computational limits. 
Even the generation of all possible groups becomes computationally intractable for rather moderate parameters. In this 
paper we describe a toolbox of theoretical and computational methods how to determine the best constant dimension codes 
admitting an arbitrary automorphism group of reasonable size, partially overcoming the inherent combinatorial explosion 
of the problem. 

Most of the techniques will be rather general. However, for our numerical computations we will focus on 
the specific set of parameters of $A_2(7,4;3)$, which is the smallest undecided case for binary constant dimension 
codes.\footnote{The parameters $n$, $k$, and $d$ have to satisfy $1\le k\le n$, $d\equiv 0\pmod 2$, and $2\le d\le 2k$. 
Taking all $\gauss{n}{k}{q}$ $k$-dimensional subspaces of $\GF_q^n$ yields $A_q(n,2;k)=\gauss{n}{k}{q}$. The case 
$d=2k$ corresponds to partial $k$-spreads, i.e., trivially intersecting unions of $k$-dimensional subspaces of $\GF_q^n$. 
For $q=2$ the maximum possible cardinalities are known for $n<11$ and the smallest undecided case is $129\le A_2(11,8;4)\le 132$, 
see e.g.~\cite{beutelspacher1975partial,kurzspreads,kurzspreadsII}. The first non-trivial and non-spread case 
$A_2(6,4;3)=77$ was treated in \cite{hkk77}. The corresponding five isomorphism types of optimal codes have been 
classified by a mixture of theoretical arguments and severe computer computations.} Prior to this paper, the best 
known bounds were $329\le A_2(7,4;3)\le 381$.\footnote{See \url{http://subspacecodes.uni-bayreuth.de} and the corresponding 
technical manual~\cite{TableSubspacecodes} for an on-line table of known bounds on $A_q(n,d;k)$.}  During our systematic 
approach we found a corresponding code of cardinality $333$. In the language of projective geometry, see e.g.~\cite{etzionsurvey,greferath2018network} 
for recent surveys,  those codes correspond to collections of planes in $\PG{6}{2}$ mutually intersecting 
in at most a point. $381$ such planes would correspond to a binary $q$-analog of the Fano plane, whose existence is still 
unknown. In dimension $n=13$ a binary $q$-analog of a Steiner system was shown to exist in \cite{MR3542513}. For our parameters 
in dimension $n=7$ it was shown recently in \cite{kiermaier2016order} that a (still) possible binary $q$-analog of the 
Fano plane has an automorphism group of order at most $2$.

With respect to the concrete parameters, the main contributions of our paper are:

\begin{theorem}\label{main_thm_1}
Let $C$ be a set of planes in $\operatorname{PG}(6,2)$ mutually intersecting in at most a point.
If $\left|C\right|\geq 329$, then the automorphism group of $C$ is conjugate to one of the $33$ subgroups of $\operatorname{GL}(7,2)$ given in Appendix~\ref{sec_app_survivinggroups}.
The orders of these groups are $1^1 2^1 3^2 4^7 5^1 6^3 7^2 8^{11} 9^2 12^1 14^1 16^1$ denoting the number of cases as exponent.
Moreover, if $\left|C\right| \geq 330$ then $\left|\operatorname{Aut}(C)\right| \leq 14$ and if $\left|C\right| \geq 334$ then $\left|\operatorname{Aut}(C)\right| \leq 12$.
\end{theorem}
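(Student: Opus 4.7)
The plan is to systematically classify all conjugacy classes of subgroups $G \leq \operatorname{GL}(7,2)$ for which some $G$-invariant set $C$ of planes in $\PG{6}{2}$, pairwise meeting in at most a point, achieves $|C| \geq 329$. The crucial monotonicity property is that if $H \leq G$, then every $G$-invariant code is also $H$-invariant, so the maximum size $M(G)$ of a $G$-invariant admissible code satisfies $M(G) \leq M(H)$. This reduces the task to first identifying the much smaller set of minimal subgroups $H$ with $M(H) \geq 329$ and then climbing the subgroup lattice upwards from those seeds.

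For a fixed candidate group $G$, I would compute its orbits $O_1,\dots,O_r$ on the $\gauss{7}{3}{2} = 11811$ planes of $\PG{6}{2}$ and formulate a $0/1$ integer program with one variable $x_i$ per orbit. The pairwise-intersection constraint translates, for each line $L \in \PG{6}{2}$, into $\sum_i a_{L,i}\, x_i \leq 1$, where $a_{L,i}$ counts the planes in $O_i$ containing $L$; the objective is $\sum_i |O_i|\, x_i$. Any upper bound (LP relaxation, Gomory cuts, or the exact optimum) that falls below $329$ eliminates the group, while for the surviving groups the IP solution produces an explicit code, in particular the announced one of cardinality $333$.

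The main structural obstacle is that $|\operatorname{GL}(7,2)| \approx 1.6\cdot 10^9$ has far too many subgroups to enumerate directly, so the search must be organised differently. I would first enumerate conjugacy-class representatives of elements $g \in \operatorname{GL}(7,2)$, parametrised by their rational canonical form, and test only the cyclic groups $\langle g\rangle$ by the orbit-ILP procedure above. By the monotonicity observation, every group $G$ that admits a code of size $\geq 329$ must contain (a conjugate of) some element $g$ with $M(\langle g\rangle)\geq 329$; only the surviving conjugacy classes need to be combined to form larger candidate groups. Two-generator candidates are then built by pairing surviving cyclic subgroups, again filtered by the orbit-ILP bound, and the process iterates with respect to inclusion. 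The sharpening $|\operatorname{Aut}(C)|\leq 14$ for $|C|\geq 330$ and $|\operatorname{Aut}(C)|\leq 12$ for $|C|\geq 334$ follows from re-examining each surviving group's IP at the higher threshold, using the corresponding structure of the orbit sizes.

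The hard part will be twofold. First, even with aggressive pruning, the orbit IPs for larger surviving groups can become numerically demanding, and one needs sharp LP bounds or tailored branching to keep the computation feasible. Second, and more delicate, is the \emph{completeness} of the enumeration: one must certify that every overgroup of a surviving subgroup has been reached exactly once up to conjugacy, with no viable conjugacy class silently pruned. Establishing this bookkeeping rigorously, and verifying that the final list indeed contains precisely the 33 conjugacy classes with the stated order distribution $1^1\,2^1\,3^2\,4^7\,5^1\,6^3\,7^2\,8^{11}\,9^2\,12^1\,14^1\,16^1$, is where the substantive combinatorial work lies.
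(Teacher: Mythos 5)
Your overall strategy coincides with the paper's: exploit monotonicity and conjugation invariance, run a Kramer--Mesner-type orbit ILP per candidate group, and climb the subgroup lattice from surviving cyclic groups obtained from rational (Frobenius) normal forms. The ILP you describe is essentially the paper's $\text{ILP}(U)$, and the final claims about $|C|\geq 330$ and $|C|\geq 334$ are indeed read off from the optima of the surviving groups' ILPs.

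However, there is a genuine gap exactly at the point you flag but do not resolve: certifying \emph{completeness} of the enumeration of conjugacy classes of candidate groups. Your proposal to build larger candidates by ``pairing surviving cyclic subgroups'' into two-generator groups and iterating does not obviously produce every relevant conjugacy class exactly once, nor does it terminate with a certificate that nothing was missed; moreover, for $\operatorname{GL}(7,2)$ a direct enumeration of all subgroups of the critical orders (notably $16$, $32$, $64$) is computationally out of reach, so one cannot fall back on brute force. The paper closes this gap with a specific lemma: if every subgroup of order $u$ of $G$ necessarily contains a \emph{normal} subgroup of order $t$ (guaranteed for $u=2t$ since index-$2$ subgroups are normal, and otherwise via the Sylow theorems, Hall's theorem for solvable orders, or Small Groups library data), then every surviving group $U$ of order $u$ satisfies $T\trianglelefteq U\leq N_G(T)$ for some surviving $T$ of order $t$, so a complete list up to conjugacy is obtained by searching only inside the normalizers $N_G(T)$ of the (few) surviving groups $T$. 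Combined with the reduction --- via the excluded prime-power groups and $|\operatorname{GL}(7,2)|=2^{21}\cdot 3^4\cdot 5\cdot 7^2\cdot 31\cdot 127$ --- to orders dividing $2^4\cdot 3^2\cdot 5\cdot 7$, this is the missing idea that turns your outline into a proof. A further small imprecision: a group $G$ survives only if \emph{every} cyclic subgroup of $G$ survives, not merely some element $g$ with $M(\langle g\rangle)\geq 329$; the stronger statement is what the pruning actually relies on.
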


\begin{theorem}\label{main_thm_2}
In $\operatorname{PG}(6,2)$, there exists a set $C$ of $333$ planes mutually intersecting in at most a point.
Hence,
\[A_2(7,4;3) \ge 333\text{.}\]
The set $C$ is given explicitly in Appendix~\ref{sec_app_333}.
Its automorphism group $\operatorname{Aut}(C)$ is isomorphic to the Klein four-group.
It is the group $G_{4,6}$ in Appendix~\ref{sec_app_survivinggroups}.
\end{theorem}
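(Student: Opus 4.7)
The assertion is existential with an explicit witness, so my approach is constructive: build a code of the required size and then verify its properties. To organize the search, I would exploit Theorem~\ref{main_thm_1} by prescribing one of the $33$ candidate groups as an automorphism group and looking only for invariant codes. The natural candidate is the Klein four-group $G_{4,6}$, whose order is low enough that many orbits remain free but high enough to cut the search space down to a tractable size.

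Concretely, I would carry out a Kramer--Mesner search. Let $G := G_{4,6} \le \operatorname{GL}(7,2)$ act on $\PG{6}{2}$, and let $L_1,\dots,L_r$ and $P_1,\dots,P_c$ be orbit representatives on lines and planes, respectively. Form the incidence matrix $M \in \mathbb{Z}^{r \times c}$ with $M_{i,j}$ equal to the number of planes in the orbit $P_j^G$ containing $L_i$. Then $G$-invariant sets of planes with pairwise intersection at most a point correspond bijectively to vectors $x \in \{0,1\}^c$ satisfying $Mx \le \mathbf{1}$, and the associated code has cardinality $\sum_{j=1}^c |P_j^G|\, x_j$. Solve the resulting integer linear program with a standard branch-and-cut solver; a feasible solution of objective value $333$ expands, via the orbit data, to the explicit list $C$ recorded in Appendix~\ref{sec_app_333}.

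With $C$ in hand, verification reduces to three direct checks: (i) $|C| = 333$; (ii) every pair of planes in $C$ intersects in dimension at most $1$ --- automatic from feasibility of the ILP, but cheap to reconfirm pair by pair; and (iii) $\operatorname{Aut}(C) = G_{4,6}$. For (iii), $G_{4,6} \le \operatorname{Aut}(C)$ holds by construction. Conversely, Theorem~\ref{main_thm_1} forces $\operatorname{Aut}(C)$ to lie in one of the $33$ listed conjugacy classes, and $|C| = 333 \ge 330$ bounds its order by $14$. It then suffices to show that none of the finitely many listed overgroups of $G_{4,6}$ of order at most $14$ stabilizes $C$, which is a short direct computation of the permutation action of each candidate on the set $C$; equivalently, one runs a partition-refinement backtracking algorithm to compute $\operatorname{Aut}(C)$ in $\operatorname{GL}(7,2)$ outright.

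The principal obstacle is computational: even with the $G_{4,6}$ reduction, the ILP has many orbits and a rather loose LP relaxation, so a naive solve is slow. Practical accelerators include warm-starting from the previously known code of size $329$ (lifted to $G$-invariance whenever possible) and handling the larger groups from Theorem~\ref{main_thm_1} first, so that $G_{4,6}$ is approached only after the easier cases have been closed and tighter cutting planes for the LP relaxation have been identified.
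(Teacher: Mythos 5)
Your verification strategy (checking pairwise intersections directly, and pinning down $\operatorname{Aut}(C)$ via Theorem~\ref{main_thm_1} or a partition-refinement computation) is fine, but the construction step has a genuine feasibility gap. You propose to solve the plain Kramer--Mesner ILP with $G_{4,6}$ prescribed. That is precisely a computation the authors could \emph{not} complete: $G_{4,6}$ appears among the ``surviving'' groups exactly because the corresponding ILP (with roughly $3000$ plane orbits for a group of order $4$) was aborted after the time limit, and the paper records that even the order-$5$ group's ILP ``is too difficult to solve in reasonable time.'' A standard branch-and-cut solver on $\text{ILP}(G_{4,6})$, even warm-started, is therefore not a credible route to a certificate of size $333$; warm-starting supplies an incumbent but does not shrink the feasible region, which is where the difficulty lies.

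The paper's key idea, which your plan is missing, is a \emph{neighborhood restriction} rather than a warm start: they first locate (via the systematic group search) a code $C_0$ of size $329$ admitting the order-$16$ group $G_{16,1}$, and then add to the ILP the constraint that the new code share at least $c$ codewords with $C_0$ (Section~\ref{sec_modifying_codes}, with $c=300$ and $U=I$). This genuinely cuts the search space and is what makes the solve terminate; the code of size $333$ and its automorphism group $G_{4,6}\le G_{16,1}$ are \emph{discovered} this way, not prescribed in advance. Note also that all previously known codes of size $329$ have trivial automorphism group, so ``lifting a known $329$-code to $G$-invariance'' is not available as a shortcut --- the $G_{16,1}$-invariant anchor code is itself a product of the exhaustive group analysis of Theorem~\ref{main_thm_1}. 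Your plan would become workable if you replaced the plain ILP by the anchored version: the paper remarks that prescribing $U=G_{4,6}$ with intersection threshold $c=327$ relative to $C_0$ does recover a code extendible to size $333$.
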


The remaining part of the paper is structured as follows.
In Section~\ref{sec_concrete_parameters} we review the previous work done on binary constant dimension codes for our parameters $n=7$, $d=4$, and $k=3$.
Preliminaries and utilized methods are described in Section~\ref{sec_preliminaries}.
In Section~\ref{sec_ilp}, a method is described how to determine whether a code with a prescribed automorphism group and size exists.
In our analysis of the possible groups (eventually) admitting a code of size at least $329$, we start with groups of prime power order in 
Section~\ref{sec_prime_power_order} and continue with groups of non-prime-power order in Section~\ref{sec_composite_order}.
The modifications described in Section~\ref{sec_modifying_codes} of a code of size $329$ yield the code mentioned in 
Theorem~\ref{main_thm_2} and Appendix~\ref{sec_app_333}.
We draw conclusions and mention some open problems for further research in Section~\ref{sec_conclusion}.
The groups corresponding to Theorem~\ref{main_thm_1}, as well as the code of size $333$ of Theorem~\ref{main_thm_2}, are listed in the appendix.

\section{Previous work}
\label{sec_concrete_parameters}
The upper bound $A_2(7,4;3)\le 2667/7 = 381$ can be concluded by observing that there are
$2667$ 2-dimensional subspaces in $\mathbb{F}_2^7$ and every codeword contains seven 2-dimensional subspaces.

Equality is attained if each $2$-dimensional subspace is covered by exactly one codeword.
This would be a binary $q$-analog Steiner triple system $S_2(2,3,7)$.
In the \emph{limiting case} {\lq}$q=1${\rq} such a structure is well known and corresponds to subsets of $\{1,\dots,n\}$. 
It is the famous Fano plane.
The only known $q$-analogs of Steiner systems have parameters $S_2(2,3,13)$~\cite{MR3542513}.
The existence question for a $2$-analog Steiner triple system $S_2(2,3,7)$ has been tackled 
in several research papers, see 
e.g.~\cite{etzion2015new,etzion2015structure,MR2801584,MR3468803,MR3444245,MR3403762,MR1725002,MR1305448,MR908977,MR1419675}.
In~\cite{MR3398870,kiermaier2016order} the authors eliminated all but one non-trivial group 
as possible automorphism groups of a binary $q$-analog of the Fano plane, 
so that the automorphism group is known to be at most of order two.

Relaxing the condition ``equal'' to ``at most'', we arrive at
binary constant dimension codes with parameters $n=7$, $d=4$, and $k=3$.
The construction of~\cite{MR2589964} gives $A_2(7,4;3)\ge 289$.
In 2008 Etzion and Vardy~\cite{MR2796712} found a code of cardinality $294$.
A code of cardinality $304$ was found in~\cite{MR2796712} via the prescription of a cyclic group of order~$21$.
Prescribing a cyclic group of order~$15$ and modifying corresponding codes yields 
$A_2(7,4;3)\ge 329$~\cite{MR3198748}.
In the sequel, an explicit, computer-free construction of (a different) code of size $329$ 
was presented in~\cite{liu2014new,MR3444245}. For more details on the underlying 
expurgation-augmentation method see \cite{ai2016expurgation}.  
Hitherto, all known examples of codes of cardinality $329$ only admit the trivial automorphism.

In the following, we use a similar approach and reformulate the corresponding problem 
as an integer linear programming problem, see Section~\ref{sec_modifying_codes},
and succeed 
to construct a code of cardinality $333$ starting from a code of size $329$.

\section{Preliminaries}\label{sec_preliminaries}
Let $V=\mathbb{F}_q^n$ be the standard vector space of dimension $n \ge 3$. Let $C$ be 
a set of subspaces of $V$ and $K$ be a subspace of $V$. The fundamental theorem of 
projective geometry~\cite{MR0082463,MR0052795} states that the 
set of order preserving isometries is $\operatorname{P\Gamma L}(V)$. Let $q=2$ throughout this paper. Then we have 
$\operatorname{P\Gamma L}(\mathbb{F}_2^n) = \operatorname{GL}(\mathbb{F}_2^n)$ and, 
after choosing a basis of $V$, the elements in this group can be represented as matrices.
By \[U^g = g^{-1} U g \quad\text{ and }\quad U^G=\{U^g \mid g \in G\}\] we denote the \emph{conjugation} of 
$U \le \operatorname{P\Gamma L}(V)$ with $g \in \operatorname{P\Gamma L}(V)$ and $G \le \operatorname{P\Gamma L}(V)$.

For the bijective map $r$ that maps $\gauss{V}{k}{}$ to binary $k \times n$ matrices in reduced row echelon form with rank $k$ and the operation $\operatorname{RREF}$ that maps a matrix to its reduced row echelon form, the operation of $M \in \operatorname{GL}(V)$ on $K \in \gauss{V}{k}{}$ is given by matrix multiplication $r^{-1}(\operatorname{RREF}(r(K) \cdot M))$.

An element $M \in \operatorname{P\Gamma L}(V)$ is called \emph{automorphism} of $C$ if $M$ stabilizes $C$, i.e., $C\cdot M=C$.
A subgroup $U \le \operatorname{P\Gamma L}(V)$ is called \emph{an automorphism group} of $C$ if each $M \in U$ 
is an automorphism of $C$ and it is called \emph{the automorphism group} of $C$, 
$\operatorname{Aut}(C)$, if it contains all automorphisms of $C$.

For a subgroup $U \le \operatorname{P\Gamma L}(V)$, \[K \cdot U=\{K\cdot M \mid M \in U\} \quad\text{ and }\quad C \cdot U=\{K\cdot U \mid K \in C\}\] denote the \emph{orbits} of $K$ and $C$.
The \emph{orbit space} of all $k$-dimensional subspaces of $V$ and $U \le \operatorname{P\Gamma L}(V)$ is denoted as $\gauss{V}{k}{} / U$.

By $A_q(n,d;k;U)$ we denote the maximum size of a constant dimension code $C$ in $\gauss{V}{k}{}$ with subspace distance at least $d$ and $U \le \operatorname{Aut}(C)$.
Note that $A_q(n,d;k;I) = A_q(n,d;k)$ where $I$ is the identity subgroup in $\operatorname{P\Gamma L}(V)$.

This paper uses two obvious but far reaching observations.
\begin{observation}\label{obs_monotonicity}\leavevmode
\begin{enumerate}
\item $A_q(n,d;k;M) \ge A_q(n,d;k;N)$ for $M \le N \le \operatorname{P\Gamma L}(V)$ and
\item $A_q(n,d;k;U^g)=A_q(n,d;k;U)$ for all $g \in \operatorname{P\Gamma L}(V)$.
\end{enumerate}
\end{observation}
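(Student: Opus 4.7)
The plan is to prove both parts directly from the definition of $A_q(n,d;k;U)$ as the maximum size of a constant dimension code $C\subseteq\gauss{V}{k}{}$ with minimum subspace distance at least $d$ and $U\le\operatorname{Aut}(C)$. Both statements are essentially unpacking of this definition together with two elementary facts: that a smaller prescribed automorphism group is a weaker constraint, and that conjugation in $\operatorname{P\Gamma L}(V)$ is an isometry on $\gauss{V}{k}{}$.

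For part (1), I would let $C$ be any constant dimension code witnessing $A_q(n,d;k;N)$, so $|C|=A_q(n,d;k;N)$, the minimum subspace distance of $C$ is at least $d$, and $N\le\operatorname{Aut}(C)$. Since $M\le N$, transitivity of the subgroup relation yields $M\le\operatorname{Aut}(C)$, so $C$ is also admissible in the definition of $A_q(n,d;k;M)$, hence $A_q(n,d;k;M)\ge|C|=A_q(n,d;k;N)$. No extraction or reconstruction of the code is needed; the same $C$ witnesses both bounds.

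For part (2), I would use that the right action of $g\in\operatorname{P\Gamma L}(V)$ on $\gauss{V}{k}{}$ preserves dimensions of sums and intersections, and hence preserves the subspace distance $d_S$. Thus for any admissible code $C$ witnessing $A_q(n,d;k;U)$, the image $C\cdot g$ has the same cardinality and the same minimum subspace distance. The key identity is that $M\in\operatorname{Aut}(C\cdot g)$ if and only if $(C\cdot g)\cdot M=C\cdot g$, which by rewriting $M=g^{-1}(gMg^{-1})g$ is equivalent to $C\cdot(gMg^{-1})=C$, i.e., $gMg^{-1}\in\operatorname{Aut}(C)$. Applied to a subgroup this gives $U^g\le\operatorname{Aut}(C\cdot g)$ whenever $U\le\operatorname{Aut}(C)$, and therefore $A_q(n,d;k;U^g)\ge A_q(n,d;k;U)$. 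Replacing $g$ by $g^{-1}$ and $U$ by $U^g$ (and noting $(U^g)^{g^{-1}}=U$) yields the reverse inequality, hence equality.

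I do not anticipate a genuine obstacle here; the only point that deserves care is the direction convention for conjugation $U^g=g^{-1}Ug$ matching the right action $K\cdot M$ used throughout the paper, so that $(K\cdot g)\cdot(g^{-1}Mg)=K\cdot(Mg)$ lines up correctly with the rewriting in the previous paragraph. Once the bookkeeping is consistent with the convention introduced in Section~\ref{sec_preliminaries}, both parts are one-line arguments that I would present as a single short paragraph of proof.
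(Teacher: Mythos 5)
Your proposal is correct: part (1) follows because the same witnessing code $C$ satisfies $M\le N\le\operatorname{Aut}(C)$, and part (2) follows because $C\mapsto C\cdot g$ is a distance- and cardinality-preserving bijection with $\operatorname{Aut}(C\cdot g)=\operatorname{Aut}(C)^g$ under the paper's conventions. The paper offers no proof at all, labelling the statement an obvious observation, so your argument is exactly the intended (and only natural) justification.
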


For example the $32,252,031$ groups (or elements) of order two in $\operatorname{P\Gamma L}(\mathbb{F}_2^7) = \operatorname{GL}(\mathbb{F}_2^7)$ fall in just three conjugacy classes.


Occasionally, we will mention \emph{abstract types} of groups. 
We use $Z_n$ for the cyclic group, $D_n$ for the dihedral group, 
$Q_n$ for the quaternion group of order~$n$, $A_n$ for the alternating group, and 
$S_n$ for the symmetric group on $n$ elements. 
$\times$ denotes a direct product and $\rtimes$ denotes a (not necessarily unique) semidirect product of groups.

Given the \emph{abstract type} of a group, we can obtain precise information on the 
abstract types of its subgroups from the \emph{Small Groups library}~\cite{smallgroupslibrary_homepage}, implemented 
in the computer algebra system \texttt{Magma}, containing all groups with order at most~$2000$ except $1024$.

For an orbit space $X \cdot G$ the \emph{orbit type} is a number $c_1^{n_1}\cdot \ldots \cdot c_m^{n_m}$ with the meaning that $X \cdot G$ contains exactly $n_i$ orbits of cardinality $c_i$ for $i \in \{1,\ldots,m\}$ and no other orbits.

Using the observations above one can exclude all supergroups and their conjugates of a group $U$ as automorphism group 
of a subspace code of size at least $329$, as soon as $U$ can be excluded as possible automorphism group 
of such a code with the Kramer-Mesner like computation method of Section~\ref{sec_ilp}.
With this, the general idea is to (implicitly) consider all possible groups of automorphisms.

In order to formalize our approach from a more general point of view, we introduce a conjugation-invariant and monotone mapping
$\mathcal{P}$. 
For a group $U \le G$ we set 
\begin{itemize}
\item $\mathcal{P}(U)=0$, if $A_2(7,4;3;U)\le \kappa$, where we use $\kappa=328$ in this paper, 
\item $\mathcal{P}(U)=1$, if there is a code with code size $>\kappa$ such that $U$ is contained in its automorphism group or 
the computation was aborted after, say, $\Lambda$ hours. In this paper we use $\Lambda=48$.
\end{itemize}
Our strategy now is to systematically determine $P(U)$ for all subgroups $U\le G$ from the bottom up 
where we can stop the search, i.e. set $P(U)=0$, in the following cases:
\begin{enumerate}
\item If $U$ contains a subgroup whose order is in $S\subseteq\mathbb{N}$ and $\mathcal{P}(H)=0$ for all groups $H \le G$ of order $|H|\in S$. 
\item If $U$ contains a subgroup whose abstract type is in the set $T$ and $\mathcal{P}(H)=0$ for all groups $H \le G$ of type $t\in T$.
\item If $U$ contains a subgroup $H$ with $\mathcal{P}(H)=0$.
\end{enumerate}
Since only cardinalities of subgroups of $U$ need to be known in Step~(1), the theorems of Sylow and Hall,
see \cite[Section 4.2 and Thm. 9.3.1]{Hall1959}  are applied.
If the abstract type of $U$ is known, the Small Groups library can give the desired information for Step~(1).
If Step~(1) was not successful, then one can refine to the abstract type of $U$ in Step~(2).
Finally, the concrete conjugacy class of $U$ has to be known for Step~(3). Since Step~(3) is the computationally 
most expensive step, the more specialized and computationally cheap tests of Step~(1) and Step~(2) are introduced.

If $\mathcal{P}(U)$ is still undecided after all three steps, then the optimization problem from Section~\ref{sec_ilp} has to be solved.

From the group-theoretic point of view it remains to describe how the conjugacy classes of groups are generated.
For $p$-Sylow groups we need a single example since all of these groups are conjugate.
For cyclic subgroups we describe some shortcuts in Section~\ref{subsec_cyclic}.
Except for orders~$16$, $32$, and~$64$ the built-in functions of \texttt{Magma} are sufficient to produce 
the required list of conjugacy classes of groups for our parameters.
For the remaining powers of two we provide a general algorithmic tool in Subsection~\ref{subsec_generating_groups}.
Here, the idea is to extend a list of groups, having $\mathcal{P}(\cdot)=1$, to a complete list $L$ of larger groups 
of a desired order~$u$ such that all groups of order~$u$ which are not conjugate to elements of $L$ have $\mathcal{P}(\cdot)=0$.

We remark, that the definition of $\mathcal{P}(U)$ easily generalizes to the determination of $A_q(n,d;k;U)$. 
Observation~\ref{obs_monotonicity} gives the necessary monotonicity and conjugation invariance.

\subsection{Generating groups up to conjugacy}\label{subsec_generating_groups}

Let $f:\{A \le G\} \rightarrow \{0,1\}$ be a map such that
$f(A) \ge f(B)$ for all $A \le B$ and $f(A)=f(A^g)$ for all $g \in G$,

\begin{lemma}\label{key_lemma}
Let $G$ be a finite group.
Furthermore, let $t$, $u$ be integers with $t \mid u \mid |G|$ such that
any subgroup of $G$ of order~$u$ contains a normal subgroup of order~$t$.

Suppose that the set ${\mathcal T}$ consists of all conjugacy classes of subgroups $T\leq G$ of order~$t$ 
such that $f(T)=1$.
Let ${\mathcal T}_N$ be a transversal of the orbits 
under the action of $G$.
Let
\[
{\mathcal U} \!=\! \{ U^{N_G(T)} \!\mid\!
    (T, N_G(T)) \in {\mathcal T}_N, 
    T \leq U \leq N_G(T),
    |U|\!=\!u\}\text{.}
\]
Then, $f(U)=0$ for all $U \le G$ with $|U| = u$ and $U^G \not \in {\mathcal U}$.
%
%

\end{lemma}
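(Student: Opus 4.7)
The plan is to establish the contrapositive: assuming $U \le G$ has order $u$ and $f(U) = 1$, I will show that some $G$-conjugate of $U$ appears in one of the $N_G(T)$-orbits comprising $\mathcal{U}$.

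First I would invoke the standing hypothesis to extract a normal subgroup $T \trianglelefteq U$ of order $t$. Monotonicity of $f$ gives $f(T) \ge f(U) = 1$, so $f(T) = 1$, and by conjugation-invariance the whole $G$-class $T^G$ lies in $\mathcal{T}$. Since $\mathcal{T}_N$ is a transversal of the $G$-orbits on $\mathcal{T}$, there is a unique pair $(T', N_G(T')) \in \mathcal{T}_N$ with $T' \in T^G$ and an element $h \in G$ satisfying $T^h = T'$. Setting $U' := U^h$, I would note that $|U'| = u$ and that $T' = T^h \trianglelefteq U^h = U'$, since normality is preserved under conjugation.

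The key geometric step is to observe that $U' \le N_G(T')$: any group normalizes its own normal subgroups, so every element of $U'$ lies in $N_G(T')$. Therefore the data $(T', U')$ satisfies every clause in the definition of $\mathcal{U}$, namely $(T', N_G(T')) \in \mathcal{T}_N$, $T' \le U' \le N_G(T')$, and $|U'| = u$. Consequently the orbit $(U')^{N_G(T')}$ is an element of $\mathcal{U}$, and since $U' = U^h$ lies in $U^G$, the $G$-conjugacy class of $U$ meets this $N_G(T')$-orbit -- which is precisely the sense in which ``$U^G \in \mathcal{U}$''.

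The substance of the lemma is not any hard logical step but careful bookkeeping of the various actions. The set $\mathcal{U}$ is a union of $N_G(T')$-orbits indexed by representatives $T'$ drawn from the transversal $\mathcal{T}_N$, whereas the object to be located, $U^G$, is a single $G$-orbit; the structural hypothesis that every order-$u$ subgroup of $G$ contains a normal subgroup of order $t$ is exactly what forces $U$, after a suitable $G$-conjugation, into one of the normalizers $N_G(T')$ with $T'$ in $\mathcal{T}_N$. I expect the main hazard during write-up to be notational: one must avoid conflating $G$-orbits with $N_G(T')$-orbits and must verify that the choice of the element $h$ does not affect the resulting $N_G(T')$-orbit of $U'$.
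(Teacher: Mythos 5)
Your proof is correct and follows essentially the same route as the paper's: extract the normal subgroup $T\trianglelefteq U$ of order $t$ from the hypothesis, use monotonicity to get $f(T)=1$, and conclude $U\le N_G(T)$ so that the relevant $N_G(T)$-orbit lies in $\mathcal{U}$. If anything, you are more careful than the paper, which silently assumes $T$ itself is the transversal representative, whereas you explicitly conjugate by $h$ to land on $(T',N_G(T'))\in\mathcal{T}_N$ and track the induced conjugate $U'=U^h$.
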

\begin{proof}
Assume there is a $U\le G \setminus {\mathcal U}$ with cardinality $u$ and $f(U)=1$, 
then it contains a normal subgroup $T$ of cardinality $t$ and by monotony $f(T)=1$. 
It follows that $(T, N_G(T))$ represents a conjugacy class in ${\mathcal T}$.
Moreover, since $T$ is a normal subgroup in $U$ and $N_G(T)$ is the largest subgroup of $G$ having $T$ as a normal subgroup, $U\le N_G(T)$. 
Hence, $U^{N_G(T)} \in {\mathcal U}$, contradicting the assumption.
%
\end{proof}

\textbf{Remark}:
If $u/t$ is a prime, then ${\mathcal T}_N$ can be restricted to 
the conjugacy classes of
$N_G(T)$ operating on its \emph{cyclic} subgroups.

\medskip
The requirements of this lemma on $t$ and $u$ may be fulfilled in certain constellations 
with the help of the Sylow Theorems 
see e.g.~\cite[Section 4.2]{Hall1959} or the Theorem of Hall, see~\cite[Theorem 9.3.1]{Hall1959}.
If neither the Sylow theorems nor the Hall theorem can be applied, 
the \textsl{Small Groups library}~\cite{smallgroupslibrary_homepage} may be of help.
For example, it contains the information that 
any group of order~$20$ has a normal subgroup of order~$5$ or~$10$. 
Also, any group of order~$40$ has a normal subgroup of order~$2$, $5$, $10$, or~$20$.

We will use Lemma \ref{key_lemma} to handle the possible automorphism groups of order~$16$, $32$, and $64$.
\subsection{Techniques for an exhaustive search in a finite group}\label{subsec_cyclic}
Since we apply this technique to $G=\operatorname{GL}(\mathbb{F}_2^7)$, we profit from the 
special group structure of $\operatorname{GL}(\mathbb{F}_q^n)$. First, all
elements up to conjugacy can be generated by the normal forms, e.g., the Frobenius normal 
form~\cite{storjohann1998n}\footnote{The 
group $G_{4,6}$ from Appendix~B may also be written as $ \left\langle 
\left(
\begin{smallmatrix}
1 & 0 & 0 & 0 & 0 & 0 & 0 \\
0 & 0 & 1 & 0 & 0 & 0 & 0 \\
0 & 1 & 0 & 0 & 0 & 0 & 0 \\
0 & 0 & 0 & 0 & 1 & 0 & 0 \\
0 & 0 & 0 & 1 & 0 & 0 & 0 \\
0 & 0 & 0 & 0 & 0 & 0 & 1 \\
0 & 0 & 0 & 0 & 0 & 1 & 0
\end{smallmatrix}
\right),
\left(
\begin{smallmatrix}
1 & 0 & 0 & 0 & 0 & 0 & 0 \\
1 & 0 & 1 & 1 & 1 & 1 & 1 \\
1 & 1 & 0 & 1 & 1 & 1 & 1 \\
1 & 0 & 1 & 1 & 1 & 1 & 0 \\
1 & 1 & 0 & 1 & 1 & 0 & 1 \\
0 & 0 & 1 & 0 & 1 & 0 & 0 \\
0 & 1 & 0 & 1 & 0 & 0 & 0
\end{smallmatrix}\right)\right\rangle$, where the first generator is in Frobenius normal form.}.

Secondly, given an element $c\in G$, the check if a group $U \le G$ contains a conjugate of a cyclic 
subgroup $C=\langle c \rangle$ is easy.

We denote the eigenspace for the eigenvalue $1$, i.e., the fixed-point space, by $\operatorname{eig}(C,1)$. Note that 
$\dim(\operatorname{eig}(C,1))$ is invariant in the conjugacy class $C^G$. If for fixed integers $m$ and $n$ all cyclic subgroups 
$C \le G$ with $|C|=m$ and $\dim(\operatorname{eig}(C,1))=n$ are excluded, then all groups $U \le G$ having an element $c$ of order $m$ and 
$\dim(\operatorname{eig}(\langle c\rangle ,1))=n$ can be excluded as well. Furthermore this test replaces the expensive test 
for containment up to conjugacy.

In the remainder of this paper, we will simply speak of the dimension of the fixed-point space and use it in the context of 
cyclic groups and their conjugacy classes.

\section{An integer linear programming formulation for constant dimension codes with prescribed automorphisms}\label{sec_ilp}
In~\cite{MR2796712}, a computational method based on the Kramer-Mesner approach for large subspace codes with prescribed automorphism group is presented.
We adopt a similar method using an integer linear program (ILP) that provides lower and upper bounds on 
$A_2(7,4;3;U)$ for a prescribed automorphism subgroup $U\le G$.

Let $\gauss{\mathbb{F}_2^7}{3}{}$ and $\gauss{\mathbb{F}_2^7}{2}{}$ denote the set of all 
$3$-dimensional subspaces and $2$-dimensional subspaces in $\mathbb{F}_2^7$.
For a given group $U$ of prescribed automorphisms, let $T_3(U)$ be a transversal of the orbit space 
$\gauss{\mathbb{F}_2^7}{3}{} / U$ and $T_2(U)$ be a transversal of the orbit space $\gauss{\mathbb{F}_2^7}{2}{} / U$.
By $t(K,U)\in T_3(U)$ we denote the representative of the orbit containing $K\in \gauss{\mathbb{F}_2^7}{3}{}$.
As variables we choose $x_K\in\{0,1\}$, where $x_K=1$ if and only if the entire orbit $K\cdot U$ for $K\in T_3(U)$ is contained in the code.
The incidences are modeled with $M^U=(m_{T,K})_{T \in T_2(U), K \in T_3(U)}$ where 
\[
    m_{T,K} = |\{ W \in K \cdot U \mid T \le W \}|\text{.}
\]
Finding best constant dimension codes having this group of automorphisms can be formulated as an ILP, which easily 
generalizes to the determination of $A_q(n,d;k;U)$:
\begin{align*}
\text{ILP}(U)=&\max &\sum_{K \in T_3(U)} |K\cdot U|\cdot &x_K\\
&\text{s.t.} &M^U &x \le 1\\
&&&x_K \in \{0,1\} && \forall K\in T_3(U)
\end{align*}
By replacing the binary $x_K \in \{0,1\}$ by the weaker constraint $0\le x_K\le 1$ 
we obtain the so-called linear programming (LP) relaxation.

In case $m_{T,K} \ge 2$, the corresponding variable $x_K$ is trivially zero and consequently the orbit $K \cdot U$ is not in the code.

In order to compute $\mathcal{P}(U)$ for a given group, we first compute the optimal target value $z$ of the LP-relaxation, 
which can always be done in reasonable time. 
If $z<\kappa+1=329$ for the LP, then $\mathcal{P}(U)=0$. Otherwise we try to solve $\text{ILP}(U)$.
If an integral solution with target value at least $\kappa+1$ is found, or the computer search is abandoned after reaching a certain time limit,
then $\mathcal{P}(U)=1$. Otherwise we set $\mathcal{P}(U)=0$.

\subsection{Using the automorphisms of the orbit space}\label{subsec_symmetry_normalizer}
The prescription of a group $U \le \operatorname{GL}(\mathbb{F}_2^7)$ yields the orbit space $\gauss{\mathbb{F}_2^7}{3}{} / U$, 
which in turn has automorphisms.
It is well known that $N_{\operatorname{GL}(\mathbb{F}_2^7)}(U) \le \operatorname{Aut}(\gauss{\mathbb{F}_2^7}{3}{} / U)$.
These automorphisms can be used to reduce the overall solving time of the ILP. 

For this, let $O(U) := (\gauss{\mathbb{F}_2^7}{3}{} / U) / N_{\operatorname{GL}(\mathbb{F}_2^7)}(U)$ and
$t(o,U)$ be an arbitrary orbit of $O(U)$ containing $o\in \gauss{\mathbb{F}_2^7}{3}{} / U$.
For a $K$ in $t(o,U)$ the ILP from above is extended to $\text{ILP}_o$ by adding the constraint $x_{t(K,U)}=1$.

We will solve the $|O(U)|$ problems $\text{ILP}_o$. 
Thanks to the automorphisms this is sufficient to solve the initial ILP: 
$\mathcal{P}(U)=0 \Leftrightarrow \max\{z(\text{ILP}_o) \mid o \in O \} < \kappa +1$, 
where $z(\cdot{})$ denotes the objective value.
After choosing an ordering $\{o_1,\ldots,o_{|O(U)|}\}=O(U)$, 
processing $\text{ILP}_{o_i}$ yields additional information for the problems 
$\text{ILP}_{o_{i+1}}, \ldots, \text{ILP}_{o_{|O(U)|}}$.
If $z(\text{ILP}_{o_i}) \ge \kappa +1$ then we finish with $\mathcal{P}(U)=1$,
else no orbit in $o_i$ is part of any code with size at least $\kappa +1$ and can be excluded in the following $\text{ILP}_o$ by adding
the constraint
\begin{eqnarray}
x_{t(K,U)}=0 \text{ for a } K \in o' \text{ for all } o' \in o \text{.} \label{eqn_forb}
\end{eqnarray}
Therefore, the arrangement of these subproblems is important.
The goal is to have a small overall solving time, hence we sort $\{\text{ILP}_o \mid o \in O\}$ 
in decreasing size of $|o|$ and in case of equality decreasing in the number of forced codewords.
The first sorting criterion ensures few remaining automorphisms, due to the orbit-stabilizer theorem, 
whereas the second criterion ensures small computation times due to the fixtures.

To decrease the overall solving time even further, after determining the order of $\text{ILP}_o$, 
we assume that $\mathcal{P}(U)$ will be $0$ and generate all problems with the implied exclusions of~(\ref{eqn_forb}) 
beforehand and start solving them in parallel.
If there is an $o \in O$ with $z(\text{ILP}_o) \ge \kappa +1$, then our assumption was wrong and we return $\mathcal{P}(U)=1$.

\section{Groups of prime power order}\label{sec_prime_power_order}
We first start to consider groups of prime power order.
Due to $|\operatorname{GL}(\mathbb{F}_2^7)|=2^{21} \cdot 3^4 \cdot 5 \cdot 7^2 \cdot 31 \cdot 127$ it 
suffices to consider the primes $2$, $3$, $5$, $7$, $31$, and $127$. 
All necessary conjugates of subgroups were computed using \texttt{Magma}.

\subsection{Groups of order 5, 31, or 127}\label{subsec_r_1}

From the factorization of $|\operatorname{GL}(\mathbb{F}_2^7)|$ it follows that 
there is exactly one subgroup of $\operatorname{GL}(\mathbb{F}_2^7)$ up to conjugacy of order $5$, $31$, and $127$.

The group of order $127$ yields codes of maximum size $254$~\cite{MR2796712,MR908977}.

The group of order $31$ yields an orbit space of the $3$-dimensional subspaces of type $31^{381}$. 
The orbit space on the $2$-dimensional subspaces has the type $1^1 31^{86}$.
Solving the corresponding ILP yields a code of size $279$ which is also the maximum cardinality for this automorphism group.

The group of order $5$ has orbit type $1^1 5^{2362}$ on the $3$-dimensional subspaces and $1^7 5^{532}$ on the $2$-dimensional subspaces.
Unfortunately, this ILP is too difficult to solve in reasonable time.
Thus only $G_{5,1}$ (cf. Appendix~\ref{sec_app_survivinggroups}) remains.

\subsection{Groups of order $\mathbf{3^a}$ or $\mathbf{7^a}$}\label{subsec_p_3_7}
All groups of order $7$ are cyclic so that they can be computed using the Frobenius normal form. There are three non-conjugate groups. 
One of them can only yield codes of size at most 296 whereas the other two could not be excluded in reasonable time. A nontrivial 
element in the excluded group has a 4-dimensional fixed-point space and any non-trivial element of the non-excluded groups has 1-dimensional fixed-point spaces.

Since the maximum power of the prime 7 is 49 in $|\operatorname{GL}(\mathbb{F}_2^7)|$, there is exactly one subgroup of order 49 up to conjugacy. Using the Sylow theorems, it has to contain at least one subgroup of any conjugacy class of order 7. In particular it has to contain a conjugate to the previously excluded group of order 7. Therefore the group of order 49 cannot yield larger codes than 296.

The same can be performed for the groups of order 3. There are exactly three conjugacy classes of subgroups of order 3. One yields codes of cardinality at most 255. The other two groups could not be excluded in reasonable time.

There are exactly 4 groups of order 9 in the group $\operatorname{GL}(\mathbb{F}_2^7)$ up to conjugacy. Two of them contain the previously excluded group of order 3 and hence can only yield a largest code cardinality of 255. The other two groups of order 9 cannot be excluded. They have abstract type $Z_{9}$ and $Z_{3} \times Z_{3}$.

There are 3 conjugacy classes of groups of order 27.
One of them contains a conjugate of the excluded group of order 3.
With the methods of Section~\ref{sec_ilp}, we see that both groups yield codes of maximum size 309.

The unique conjugacy class of groups of order 81 contains a conjugate of the excluded group of order 3 and can therefore yield only codes of size at most 255.

Thus only $G_{7,1}$, $G_{7,2}$, $G_{3,1}$, $G_{3,2}$, $G_{9,1}$, and $G_{9,2}$ (cf. Appendix~\ref{sec_app_survivinggroups}) remain.

\subsection{Groups of order $\mathbf{2^a}$}\label{subsec_p_2_groups}

There are 3 conjugacy classes of groups of order 2 in $\operatorname{GL}(\mathbb{F}_2^7)$. 
The first cannot be excluded and has a 4-dimensional fixed-point space. 
The second can only yield codes of size 298 and has an 5-dimensional fixed-point space. 
The third can only yield codes of size 106 and has a 6-dimensional fixed-point space, cf.~\cite{MR2796712}.

There are $42$ conjugacy classes of subgroups of order $4$ in the group $\operatorname{GL}(\mathbb{F}_2^7)$. 
All but $8$ contain at least one already excluded group of order $2$, cf.\cite{MR3398870}. 
One of the remaining 8 groups can yield codes of size at most $327$.

There are $867$ conjugacy classes of subgroups of order~$8$ in the group $\operatorname{GL}(\mathbb{F}_2^7)$. 
All but $38$ contain an already excluded group of order~$2$. All but $11$ of the remaining groups
can be excluded computationally.

For the subgroups of order~$16$, we apply the technique described in the Section~\ref{subsec_generating_groups}.
Since a subgroup of index~$2$ is necessarily a normal subgroup, see e.g. \cite[Cor.~2.2.1]{Hall1959}, 
Lemma~\ref{key_lemma} can be applied for $t=8$ and $u=16$.
Up to conjugacy there are exactly $50$ subgroups of order $16$ of the group $\operatorname{GL}(\mathbb{F}_2^7)$
such that no contained $2$-subgroup is already excluded.
Solving the corresponding ILPs from Section~\ref{sec_ilp} shows that these $50$ subgroups 
can yield codes of cardinality at most $329$ and 
exactly one group attains this bound.

This group is of type $(Z_{4} \times Z_{2}) \rtimes Z_{2}$, see $G_{16,1}$ in the appendix, 
and it will play a major role in the process of finding the code of cardinality 333. 
In fact, there are up to isomorphism exactly $12$ codes of size $329$ under prescription of $G_{16,1}$. 
Each code has the orbit type $1^1 2^2 4^9 8^8 16^{14}$ and each of the $12$ isomorphism classes has 16 codes, 
summing up to a total of $192$ codes, which have $G_{16,1}$ as automorphism group.

Stepping the 2-Sylow ladder further up by applying Lemma~\ref{key_lemma} to $G_{16,1}$ 
with $t=16$ and $u=32$, we found a group of order $32$ that yields a code of size $327$ 
and by applying Lemma~\ref{key_lemma} to this group, we found a group of order $64$ that yielded a code of size $317$.

Thus only $G_{2,1}$, $G_{4,1}$, \ldots, $G_{4,7}$, $G_{8,1}$, \ldots, $G_{8,11}$, and $G_{16,1}$ (cf. Appendix~\ref{sec_app_survivinggroups}) remain.

\section{Groups of non-prime-power order}\label{sec_composite_order}
Using the Sylow theorems~\cite[Thm. 4.2.1]{Hall1959}, we conclude from the results in Section~\ref{sec_prime_power_order}
that we only have to consider groups with an order that divides $2^4 \cdot 3^2 \cdot 5 \cdot 7$.

In the following we give a summary of the computer search.  
The full list of remaining orders in the sequence that we utilized can be found in Appendix~\ref{sec_app_remainingcompositeorders}.

We considered all remaining orders in the sequence of increasing size.
All conjugacy classes of groups with the orders $6$, $10$, $12$, $14$, $15$, $18$, $21$, $24$, $28$, and $56$
had to be computed. Applying the ILP in Section~\ref{sec_ilp} give that codes larger than $328$ 
are not possible except the group order is  $6$, $12$, or $14$. More precisely, only $G_{6,1}$, $G_{6,2}$, $G_{6,3}$, 
$G_{12,1}$, and  $G_{14,1}$ (cf. Appendix~\ref{sec_app_survivinggroups}) remain. In particular all groups of type $A_4$ were 
excluded, i.e., none of them is an automorphism group of a code of size at least $329$.
The groups of order $36$ were computed but then theoretically excluded since they contain an 
excluded group of prime order or contain a subgroup of type $A_4$.

Next, using the Theorem of Hall \cite[Thm. 9.3.1]{Hall1959} each group of the solvable orders
30, 42, 70, 84, 90, 105, 126, 140, 210, 252, 280, 315, 560, and 630
has a subgroup that was previously excluded.
The groups of order
20, 40, 45, 60, 63, 120, 144, 168, 180, 240, 360, 420, 720, 840, 1008, 1260, and 1680
could be excluded using the Small Groups library~\cite{smallgroupslibrary_homepage}.
The orders 48, 72, 80, 112, 336, and 504
could be excluded along the same lines using a refined analysis, e.g. 
the groups of order 48 contain a subgroup of the excluded order 24 or a subgroup of type $A_4$.
The group orders 35, 2520, and 5040 had to be computed but all of them contain an excluded group of prime order. 
The last two orders, i.e., 2520 and 5040, had to be computed 
because the Hall Theorem~\cite[Thm. 9.3.1]{Hall1959} is not applicable since these orders are non-solvable numbers and 
the Small Groups library does not contain data about groups of these orders.

To sum up, only $G_{6,1}$, $G_{6,2}$, $G_{6,3}$, $G_{12,1}$, and  $G_{14,1}$ (cf. Appendix~\ref{sec_app_survivinggroups}) remain.

\section{Modifying codes to get cardinality 333}\label{sec_modifying_codes}
Since we found an automorphism group of order 16 that yields a code $C$ of size 329, i.e., $G_{16,1}$ in Appendix~\ref{sec_app_survivinggroups}, 
we searched for codes having large intersection with $C$ and automorphism groups $U \le G_{16,1}$.

Therefore, using nonnegative integers $c$ and $c'$, we add the constraint
\[
\sum_{T \in \{t(K,U) \mid K \in C\}} |T\cdot U| \cdot{} x_T \ge c 
\]
to $\text{ILP}(U)$.
This constraint restricts the exchangeability of $U$-orbits.

By choosing the neighborhood parameter $c=300$ and $U=I$, this ILP yielded a code of size $333$, cf. Appendix~\ref{sec_app_333}.
Further investigation showed that the code of size $333$ has the automorphism group $G_{4,6} \le G_{16,1}$ of order $4$,
see Appendix~\ref{sec_app_survivinggroups}.

It turned out that it would have been sufficient to choose $U=G_{4,6}$ and $c=327$ to get a code 
that is extendible to a code of cardinality $333$ having $G_{4,6}$ as automorphism group.
In fact, removing two fixed spaces allows to add two other fixed spaces and two orbits of size two.

$35$ $3$-subspaces of this code of size $333$ are subspaces of the hyperplane in which each vector has zero as first entry. 
Omitting these $35$ subspaces yields a code of size $298$ in the affine geometry $\operatorname{AG}(6,2)$ \cite{zumbragel2016designs}.

\section{Conclusions}\label{sec_conclusion}
In this paper we have considered the problem of the determination of $A_2(7,4;3)$, which is the first open 
case for binary constant dimension codes. Prior to this paper the best known bounds were $329\le A_2(7,4;3)\le 381$. All of 
the previously known constant dimension codes of size $329$ have a trivial automorphism group. By an indirect systematic approach 
we have determined all groups that can be a subgroup of the automorphism group of a constant dimension code in $\mathbb{F}_2^7$ with 
minimum subspace distance $d=4$ that consists of at least $329$ planes. This way we found the unique group of order $16$ that permits 
such a code of size $329$. While not improving the lower bound for the code size, the presence of automorphisms can be beneficial 
in the decoding process. At this place we remark that we are not able to determine the number of conjugacy classes of all subgroups 
of order $16$ in $\operatorname{GL}(\mathbb{F}_2^7)$. Without the systematic approach this group might never have been found. Modifying the 
mentioned code of size $329$ we found a code of cardinality $333$ with an automorphism group of order $4$, which currently is the best known 
construction of a constant dimension code in $\mathbb{F}_2^7$ with minimum subspace distance $4$ and codewords of dimension $3$.

The gap to the upper bound $381$ is still tremendous. However, a lot of effort has been put into the determination of $A_2(7,4;3)$ by 
various researchers. Still the upper bound $381$ can only be excluded for automorphism groups of order larger than $2$. New insights are 
needed to computationally obtain stronger bounds. Our results indicate that, for these specific parameters, good codes either have to have 
small automorphism groups or their size is quite distant to the value of the anticode bound, i.e., $381$. 

In principle the techniques presented in that paper are widely applicable. However, the inherent combinatorial explosion for 
constant dimension codes does not allow too many feasible parameters for not too large groups. For $q=2$ the next open cases are 
$1326\le A_2(8,4;3)\le 1493$ and $4801\le A_2(8,4;4)\le 6477$, see \cite{braun2016new,TableSubspacecodes}. For $A_2(8,4;3)$ e.g.\  the 
group $G_{16,1}$ performs pretty bad and the LP relaxation gives an upper bound of $1292$. Over the ternary field the 
first open case is $754\le A_3(6,4;3)\le 784$, see \cite[Theorem 2]{hkk77} or \cite{cossidente2018geometrical,cossidente2016subspace}. 
Using the systematic approach we were able to reproduce the best known
size 754, but unfortunately no improvement above that has been found.  
First experiments did not yield larger codes than already known in the three parameter sets mentioned above. To get an idea of the 
combinatorial complexity we note that  
the number of solids in $\mathbb{F}_2^8$ is given by $\gauss{8}{4}{2}=200,\!787$. For groups of orders around $20$ the corresponding 
integer linear programs cannot be solved exactly by standard solvers in reasonable time. Even the exclusion of the existence of $381$ planes 
in $\mathbb{F}_2^7$ with minimum subspace distance $4$ that admit an automorphism of order $2$ is currently out of reach \cite{kiermaier2016order}.      

We have applied the presented algorithmic approach to a closely related combinatorial structure. A $t$-$(v,k,\lambda)_q$ packing design 
is a set of $k$-dimensional subspaces of $\mathbb{F}_q^v$ such that every $t$-dimensional subspace is covered at most $\lambda$ times. 
The $2$-$(6,3,2)_2$ packing design of cardinality $180$ with an automorphism group of order $9$ from \cite{qgdd} was quickly rediscovered 
using the presented algorithmic approach. The packing design is indeed optimal, which can be shown using a Johnson-type argument. For 
$2$-$(7,3,\lambda)_2$ packing designs the cardinality is upper bounded by $\lambda\gauss{7}{2}{2}/\gauss{3}{2}{2}=381\lambda$. If the 
upper bound is attained we have a design. For $\lambda=3$ such a design exists, see \cite{Braun2005}, and for $\lambda=1$ the maximum 
cardinality equals $A_2(7,4;3)$. Using our algorithmic approach we found a group of order $27$, isomorphic to the Heisenberg group over 
$\mathbb{F}_3$, that admits a $2$-$(7,3,2)_2$ 
packing design of cardinality $741$, i.e., just $21$ away from the upper bound $762$.
For $2$-$(6,3,3)_3$ packing designs we found an example of cardinality $2368>2262=3\cdot 754$ using a group of order $13^2$.           

The presented algorithmic approach is applicable for a much wider class of combinatorial objects. The only requirements 
are that $\mathcal{P}$ is constant on conjugacy classes and monotone as defined in Section~\ref{sec_preliminaries}. In 
\cite{heinlein2018generalized} the method was applied to find sets of $m_4$ solids and $m_3$ planes in $\mathbb{F}_2^7$ such that 
every plane is covered at most once.


\appendix
\section{The remaining non-prime-power orders}\label{sec_app_remainingcompositeorders}

As stated in Section~\ref{sec_composite_order}, we list here all non-prime-power numbers 
which divide $2^4 \cdot 3^2 \cdot 5 \cdot 7$. 
They have to be considered as size of a subgroup in the group $\operatorname{GL}(\mathbb{F}_2^7)$ 
to determine an exhaustive list of groups such that no other group of non-prime-power order than these listed here 
is an automorphism group of a code of size at least $329$.
In parentheses we note the line of reasoning: 
``Small Groups library'' means that the abstract type is used to show the existence of already excluded subgroups.
``Hall, solvable order'' means that the Theorem of Hall~\cite[Theorem 9.3.1]{Hall1959} is used to show 
the existence of already excluded subgroups.
Moreover ``due to groups of prime order'' means that the group has a subgroup that is excluded within Section~\ref{sec_prime_power_order}.

\begin{itemize}
\item[6] there are 12 subgroups of order 6 up to conjugacy in the group $\operatorname{GL}(\mathbb{F}_2^7)$. 9 are excluded due to groups of prime order. The 3 remaining groups cannot be excluded.
\item[10] there are 3 subgroups of order 10 up to conjugacy in the group $\operatorname{GL}(\mathbb{F}_2^7)$. 2 are excluded due to groups of prime order. The remaining group yields codes of size up to 306.
\item[12] there are 96 subgroups of order 12 up to conjugacy in the group $\operatorname{GL}(\mathbb{F}_2^7)$. 80 are excluded due to groups of prime order. All but 1 group could be excluded, it is of type $Z_{3} \rtimes Z_{4}$.
\item[14] there are 4 subgroups of order 14 up to conjugacy in the group $\operatorname{GL}(\mathbb{F}_2^7)$. 2 are excluded due to groups of prime order. One could be excluded and the other yields codes of size at most 332. The remaining group is of abstract type $Z_{14}$. One of these two groups could be solved in less then 60 seconds with an optimal value of 301. The other one was much harder and the technique described in Subsection~\ref{subsec_symmetry_normalizer} was applied. The orbit type is $1^1 2^4 7^{30} 14^{828}$ and after removing the trivially forbidden orbits $1^1 2^4 7^{28} 14^{632}$. The normalizer has order 168 and the normalizer-orbit type is $1^1 4^{13} 6^2 12^{50}$ making a total of $66$ subproblems.
\item[15] there are 3 subgroups of order 15 up to conjugacy in the group $\operatorname{GL}(\mathbb{F}_2^7)$. 1 is excluded due to groups of prime order. The remaining groups could be excluded.
\item[18] there are 16 subgroups of order 18 up to conjugacy in the group $\operatorname{GL}(\mathbb{F}_2^7)$. 13 are excluded due to groups of prime order. The remaining groups could be excluded.
\item[20] each group of order 20 contains a group of order 10 (Small Groups library)
\item[21] there are 8 subgroups of order 21 up to conjugacy in the group $\operatorname{GL}(\mathbb{F}_2^7)$. 5 are excluded due to groups of prime order. The remaining groups could be excluded.
\item[24] there are 525 subgroups of order 24 up to conjugacy in the group $\operatorname{GL}(\mathbb{F}_2^7)$. 488 are excluded due to groups of prime order. The types of these groups are: $14$ times $S_{4}$, $19$ times $Z_{2} \times A_{4}$, $2$ times $\operatorname{SL}(2,3)$, and 2 times $(Z_{6} \times Z_{2}) \rtimes Z_{2}$. All but the two groups of type $\operatorname{SL}(2,3)$ contain an excluded $Z_{12}$, $Z_{6} \times Z_{2}$, or $A_{4}$. The remaining two groups could be excluded computationally.
\item[28] there are 9 subgroups of order 28 up to conjugacy in the group $\operatorname{GL}(\mathbb{F}_2^7)$. 8 are excluded due to groups of prime order. The remaining group is of type $Z_{14} \times Z_{2}$ but could be excluded computationally.
\item[30] each group of order 30 contains a group of order 10 (Hall, solvable order)
\item[35] there is 1 subgroup of order 35 up to conjugacy in the group $\operatorname{GL}(\mathbb{F}_2^7)$. It is excluded due to groups of prime order.
\item[36] there are 61 subgroups of order 36 up to conjugacy in the group $\operatorname{GL}(\mathbb{F}_2^7)$. 59 are excluded due to groups of prime order. The remaining groups are both of type $Z_{3} \times A_{4}$ and contain an excluded $A_{4}$.
\item[40] each group of order~$40$ contains a group of order~$10$ (Small Groups library)
\item[42] each group of order~$42$ contains a group of order~$21$ (Hall, solvable order)
\item[45] each group of order~$45$ contains a group of order~$15$ (Small Groups library)
\item[48] each group of order~$48$ contains a subgroup of order~$24$ or a subgroup of abstract type $A_{4}$ (Small Groups library)
\item[56] there are $38$ subgroups of order~$56$ up to conjugacy in the group $\operatorname{GL}(\mathbb{F}_2^7)$. 26 are excluded due to groups of prime order. One group is of type $Z_{14} \times Z_{2} \times Z_{2}$ and contains an excluded $Z_{14}$. The remaining 11 groups are of type $Z_{2} \times Z_{2} \times Z_{2} \times Z_{7}$ but could be excluded computationally.
\item[60] each group of order~$60$ contains a group of order~$10$ (Small Groups library)
\item[63] each group of order~$63$ contains a group of order~$21$ (Small Groups library)
\item[70] each group of order~$70$ contains a group of order~$10$ (Hall, solvable order)
\item[72] each group of order~$72$ contains a group of order~$36$ or a subgroup of abstract type $Z_{12}$ (Small Groups library)
\item[80] each group of order~$80$ contains a subgroup of order~$10$ or a subgroup of abstract type $Z_{2} \times Z_{2} \times Z_{2} \times Z_{2}$, which yields codes of size at most 313 (Small Groups library)
\item[84] each group of order~$84$ contains a group of order~$28$ (Hall, solvable order)
\item[90] each group of order~$90$ contains a group of order~$10$ (Hall, solvable order)
\item[105] each group of order~$105$ contains a group of order~$15$ (Hall, solvable order)
\item[112] each group of order~$112$ contains a subgroup of order~$28$ or a subgroup of abstract type $Z_{2} \times Z_{2} \times Z_{2} \times Z_{2}$ (Small Groups library)
\item[120] each group of order~$120$ contains a group of order~$10$ (Small Groups library)
\item[126] each group of order~$126$ contains a group of order~$63$ (Hall, solvable order)
\item[140] each group of order~$140$ contains a group of order~$28$ (Hall, solvable order)
\item[144] each group of order~$144$ contains a group of order~$36$ (Small Groups library)
\item[168] each group of order~$168$ contains a group of order~$21$ (Small Groups library)
\item[180] each group of order~$180$ contains a group of order~$36$ (Small Groups library)
\item[210] each group of order~$210$ contains a group of order~$10$ (Hall, solvable order)
\item[240] each group of order~$240$ contains a group of order~$10$ or order~$15$ (Small Groups library)
\item[252] each group of order~$252$ contains a group of order~$28$ (Hall, solvable order)
\item[280] each group of order~$280$ contains a group of order~$35$ (Hall, solvable order)
\item[315] each group of order~$315$ contains a group of order~$63$ (Hall, solvable order)
\item[336] each group of order~$336$ contains a subgroup of order~$48$ or a subgroup of abstract type $A_{4}$ or $Q_{16}$ (Small Groups library)
\item[360] each group of order~$360$ contains a group of order~$10$ (Small Groups library)
\item[420] each group of order~$420$ contains a group of order~$28$ (Small Groups library)
\item[504] each group of order~$504$ contains a subgroup of order~$63$ or a subgroup of abstract type $D_{14}$ (Small Groups library)
\item[560] each group of order~$560$ contains a group of order~$35$ (Hall, solvable order)
\item[630] each group of order~$630$ contains a group of order~$10$ (Hall, solvable order)
\item[720] each group of order~$720$ contains a group of order~$10$ or order~$45$ (Small Groups library)
\item[840] each group of order~$840$ contains a group of order~$10$ (Small Groups library)
\item[1008] each group of order~$1008$ contains a group of order~$36$ or order~$63$ (Small Groups library)
\item[1260] each group of order~$1260$ contains a group of order~$10$ (Small Groups library)
\item[1680] each group of order~$1680$ contains a group of order~$10$ or order~$15$ (Small Groups library)
\item[2520] there are 7 subgroups of order~$2520$ up to conjugacy in the group $\operatorname{GL}(\mathbb{F}_2^7)$. All are excluded due to groups of prime order.
\item[5040] there are 4 subgroups of order~$5040$ up to conjugacy in the group $\operatorname{GL}(\mathbb{F}_2^7)$. All are excluded due to groups of prime order. None of them is solvable.
\end{itemize}

\section{The \textit{surviving} groups}\label{sec_app_survivinggroups}
By $G_{n,m}$ we denote the groups corresponding to Theorem~\ref{main_thm_1}. Here $n$ denotes the order of $G_{n,m}$ and $m$ is a consecutive index.
To the right of each group $G_{n,m}$ we list the abstract type of $G_{n,m}$.
\tiny

\begin{flalign*}
G_{1,1} = I
&&Z_{1}\end{flalign*}

\begin{flalign*}
G_{2,1}
=
\left\langle
\left(\begin{smallmatrix}
1&0&0&0&0&0&0\\
1&1&0&0&0&0&0\\
0&0&1&0&0&0&0\\
0&0&1&1&0&0&0\\
0&0&0&0&1&0&0\\
0&0&0&0&1&1&0\\
0&0&0&0&0&0&1\\
\end{smallmatrix}\right)
\right\rangle
&&Z_{2}\end{flalign*}

\begin{flalign*}
G_{3,1}
=
\left\langle
\left(\begin{smallmatrix}
1&1&0&0&0&0&0\\
1&0&0&0&0&0&0\\
0&0&1&1&0&0&0\\
0&0&1&0&0&0&0\\
0&0&0&0&1&1&0\\
0&0&0&0&1&0&0\\
0&0&0&0&0&0&1\\
\end{smallmatrix}\right)
\right\rangle
&&Z_{3}\end{flalign*}

\begin{flalign*}
G_{3,2}
=
\left\langle
\left(\begin{smallmatrix}
1&1&0&0&0&0&0\\
1&0&0&0&0&0&0\\
0&0&1&1&0&0&0\\
0&0&1&0&0&0&0\\
0&0&0&0&1&0&0\\
0&0&0&0&0&1&0\\
0&0&0&0&0&0&1\\
\end{smallmatrix}\right)
\right\rangle
&&Z_{3}\end{flalign*}

\begin{flalign*}
G_{4,1}
=
\left\langle
\left(\begin{smallmatrix}
0&0&1&0&1&0&0\\
0&1&1&0&0&1&0\\
0&0&1&1&0&0&0\\
0&0&0&1&0&0&0\\
1&0&1&1&0&0&0\\
0&0&0&1&0&1&0\\
0&0&0&0&0&0&1
\end{smallmatrix}\right)
,
\left(\begin{smallmatrix}
0&0&0&1&1&1&1\\
0&1&1&1&0&1&0\\
1&0&0&1&1&0&1\\
0&0&0&1&0&0&0\\
0&0&1&0&1&1&0\\
1&0&1&1&1&1&1\\
0&0&0&0&0&0&1
\end{smallmatrix}\right)
\right\rangle
&&Z_{2} \times Z_{2}\end{flalign*}

\begin{flalign*}
G_{4,2}
=
\left\langle
\left(\begin{smallmatrix}
1&0&0&1&0&1&0\\
1&0&0&1&1&1&1\\
0&0&1&1&0&1&0\\
1&0&1&1&0&0&0\\
1&0&1&0&1&0&0\\
1&0&1&0&0&1&0\\
0&1&1&0&1&0&0\\
\end{smallmatrix}\right)
,
\left(\begin{smallmatrix}
0&1&0&0&0&1&1\\
1&1&1&1&0&1&0\\
1&1&1&0&0&1&1\\
1&0&1&1&1&1&0\\
1&0&1&0&0&1&0\\
1&0&1&0&1&0&0\\
1&1&0&1&1&1&0\\
\end{smallmatrix}\right)
\right\rangle
&&Z_{2} \times Z_{2}\end{flalign*}

\begin{flalign*}
G_{4,3}
=
\left\langle
\left(\begin{smallmatrix}
1&1&0&0&0&1&0\\
1&1&1&1&1&0&0\\
0&1&1&0&0&1&0\\
1&0&1&0&1&0&0\\
1&0&1&1&0&0&0\\
1&0&1&1&1&1&0\\
0&1&0&1&1&1&1\\
\end{smallmatrix}\right)
,
\left(\begin{smallmatrix}
0&0&1&1&1&0&0\\
1&0&1&1&1&1&0\\
1&0&0&1&1&0&0\\
1&1&1&1&0&1&0\\
1&1&1&0&1&1&0\\
1&1&1&1&1&0&0\\
1&0&1&1&1&0&1\\
\end{smallmatrix}\right)
\right\rangle
&&Z_{2} \times Z_{2}\end{flalign*}

\begin{flalign*}
G_{4,4}
=
\left\langle
\left(\begin{smallmatrix}
1&1&1&0&0&1&1\\
1&1&1&1&0&1&0\\
0&1&0&1&1&1&1\\
1&1&0&1&1&0&1\\
1&1&0&0&0&0&1\\
1&1&0&1&0&1&1\\
0&0&0&1&1&0&1\\
\end{smallmatrix}\right)
,
\left(\begin{smallmatrix}
0&0&1&0&1&1&0\\
0&0&1&0&0&1&1\\
1&0&0&1&0&1&0\\
0&1&1&1&0&1&1\\
0&1&1&0&1&1&1\\
0&1&1&1&1&0&1\\
1&0&1&0&1&1&1\\
\end{smallmatrix}\right)
\right\rangle
&&Z_{2} \times Z_{2}\end{flalign*}

\begin{flalign*}
G_{4,5}
=
\left\langle
\left(\begin{smallmatrix}
1&1&1&0&1&0&1\\
0&0&1&0&0&1&1\\
0&1&0&1&0&0&1\\
0&0&0&1&0&0&0\\
0&0&0&0&1&0&0\\
0&0&0&0&0&1&0\\
0&0&0&1&0&1&1\\
\end{smallmatrix}\right)
,
\left(\begin{smallmatrix}
0&0&1&0&1&1&0\\
0&0&1&0&0&1&1\\
1&0&0&1&0&1&0\\
0&1&1&1&0&1&1\\
0&1&1&0&1&1&1\\
0&1&1&1&1&0&1\\
1&0&1&0&1&1&1\\
\end{smallmatrix}\right)
\right\rangle
&&Z_{2} \times Z_{2}\end{flalign*}

\begin{flalign*}
G_{4,6} 
=
\left\langle
\left(\begin{smallmatrix}
0&0&1&0&0&0&0\\
0&0&0&0&1&0&0\\
1&0&0&0&0&0&0\\
0&1&0&1&1&0&0\\
0&1&0&0&0&0&0\\
0&1&0&0&1&1&0\\
1&0&1&1&1&0&1\\
\end{smallmatrix}\right)
,
\left(\begin{smallmatrix}
1&1&0&0&0&1&0\\
1&1&0&1&1&0&1\\
0&1&1&0&0&1&0\\
1&0&0&1&0&0&1\\
1&0&0&0&1&0&1\\
1&0&0&1&1&1&1\\
0&1&0&0&0&1&1\\
\end{smallmatrix}\right)
\right\rangle
&&Z_{2} \times Z_{2}\end{flalign*}

\begin{flalign*}
G_{4,7}
=
\left\langle
\left(\begin{smallmatrix}
1&1&0&0&0&0&0\\
0&1&1&0&0&0&0\\
0&0&1&0&0&0&0\\
0&0&0&1&1&0&0\\
0&0&0&0&1&1&0\\
0&0&0&0&0&1&1\\
0&0&0&0&0&0&1\\
\end{smallmatrix}\right)
\right\rangle
&&Z_{4}\end{flalign*}

\begin{flalign*}
G_{5,1}
=
\left\langle
\left(\begin{smallmatrix}
0&1&0&0&0&0&0\\
0&0&1&0&0&0&0\\
0&0&0&1&0&0&0\\
1&1&1&1&0&0&0\\
0&0&0&0&1&0&0\\
0&0&0&0&0&1&0\\
0&0&0&0&0&0&1\\
\end{smallmatrix}\right)
\right\rangle
&&Z_{5}\end{flalign*}

\begin{flalign*}
G_{6,1}
=
\left\langle
\left(\begin{smallmatrix}
0&1&0&0&1&1&0\\
1&1&0&0&0&1&0\\
0&1&1&1&1&0&0\\
0&0&0&1&0&0&0\\
1&0&0&0&1&1&0\\
1&1&0&0&1&0&0\\
0&0&0&0&0&0&1
\end{smallmatrix}\right)
,
\left(\begin{smallmatrix}
0&1&1&1&1&0&0\\
1&1&1&1&0&0&0\\
0&1&0&0&1&1&0\\
0&1&1&1&0&1&0\\
0&0&1&1&0&0&0\\
1&0&1&0&1&1&0\\
0&0&0&0&0&0&1
\end{smallmatrix}\right)
\right\rangle
&&S_{3}\end{flalign*}

\begin{flalign*}
G_{6,2}
=
\left\langle
\left(\begin{smallmatrix}
1&1&0&1&0&1&0\\
1&1&0&1&1&0&0\\
0&0&0&1&1&1&0\\
1&1&1&0&0&0&0\\
1&0&1&0&1&1&0\\
0&1&1&0&1&1&0\\
0&0&0&0&0&0&1
\end{smallmatrix}\right)
,
\left(\begin{smallmatrix}
1&0&1&1&0&1&0\\
1&0&1&0&1&0&0\\
0&0&0&0&1&1&0\\
1&1&0&0&1&1&0\\
1&0&0&0&0&0&0\\
0&1&1&1&1&1&0\\
0&0&0&0&0&0&1
\end{smallmatrix}\right)
\right\rangle
&&S_{3}\end{flalign*}

\begin{flalign*}
G_{6,3}
=
\left\langle
\left(\begin{smallmatrix}
1&0&0&0&1&0&0\\
1&0&0&1&1&1&0\\
1&1&1&0&1&0&0\\
1&0&1&1&1&0&0\\
1&1&1&0&0&0&0\\
0&1&1&0&1&1&0\\
0&0&0&0&0&0&1
\end{smallmatrix}\right)
\right\rangle
&&Z_{6}\end{flalign*}

\begin{flalign*}
G_{7,1}
=
\left\langle
\left(\begin{smallmatrix}
0&1&0&0&0&0&0\\
0&0&1&0&0&0&0\\
1&0&1&0&0&0&0\\
0&0&0&0&1&0&0\\
0&0&0&0&0&1&0\\
0&0&0&1&1&0&0\\
0&0&0&0&0&0&1\\
\end{smallmatrix}\right)
\right\rangle
&&Z_{7}\end{flalign*}

\begin{flalign*}
G_{7,2}
=
\left\langle
\left(\begin{smallmatrix}
0&1&0&0&0&0&0\\
0&0&1&0&0&0&0\\
1&0&1&0&0&0&0\\
0&0&0&0&1&0&0\\
0&0&0&0&0&1&0\\
0&0&0&1&0&1&0\\
0&0&0&0&0&0&1\\
\end{smallmatrix}\right)
\right\rangle
&&Z_{7}\end{flalign*}

\begin{flalign*}
G_{8,1}
=
\left\langle
\left(\begin{smallmatrix}
1&1&1&0&1&0&0\\
1&0&0&0&0&1&0\\
0&0&0&1&0&0&0\\
0&0&1&0&0&0&0\\
1&1&1&1&1&1&0\\
1&0&1&0&1&0&0\\
0&0&0&0&0&0&1\\
\end{smallmatrix}\right)
,
\left(\begin{smallmatrix}
0&0&1&0&1&1&1\\
1&0&0&1&0&0&0\\
1&1&0&0&0&0&1\\
0&1&1&0&1&1&1\\
0&0&1&1&1&0&1\\
1&1&1&0&0&1&1\\
1&0&0&1&1&1&1\\
\end{smallmatrix}\right)
,
\left(\begin{smallmatrix}
0&1&0&0&0&1&1\\
1&1&1&0&0&1&1\\
1&0&0&0&0&1&1\\
0&1&0&0&0&0&1\\
0&0&0&0&1&0&0\\
1&0&0&1&0&0&0\\
1&1&1&1&0&1&1\\
\end{smallmatrix}\right)
\right\rangle
&&Z_{2} \times Z_{2} \times Z_{2}\end{flalign*}

\begin{flalign*}
G_{8,2}
=
\left\langle
\left(\begin{smallmatrix}
1&0&0&1&1&1&1\\
0&0&1&1&0&1&1\\
0&0&0&0&1&1&0\\
0&1&1&1&1&0&0\\
0&1&0&1&0&0&1\\
0&1&1&1&0&0&1\\
0&1&0&0&0&1&1\\
\end{smallmatrix}\right)
,
\left(\begin{smallmatrix}
1&0&1&0&1&1&0\\
0&0&1&0&1&0&0\\
0&0&0&1&0&0&1\\
0&0&0&0&1&1&1\\
0&1&0&1&0&0&1\\
0&1&1&0&1&1&0\\
0&0&1&0&1&1&1\\
\end{smallmatrix}\right)
,
\left(\begin{smallmatrix}
1&0&1&1&1&0&1\\
1&0&0&0&0&1&1\\
1&1&1&0&0&1&1\\
1&0&1&1&0&1&1\\
0&0&0&0&1&0&0\\
1&0&0&1&1&0&0\\
0&1&1&0&0&0&1\\
\end{smallmatrix}\right)
\right\rangle
&&Z_{2} \times Z_{2} \times Z_{2}\end{flalign*}

\begin{flalign*}
G_{8,3}
=
\left\langle
\left(\begin{smallmatrix}
1&0&1&1&0&0&0\\
1&0&0&0&1&1&1\\
1&0&0&0&0&1&1\\
0&0&0&1&1&0&0\\
0&1&1&0&0&0&0\\
0&0&0&0&1&1&1\\
1&0&1&0&1&1&0\\
\end{smallmatrix}\right)
,
\left(\begin{smallmatrix}
1&0&1&1&0&0&1\\
1&0&0&0&0&1&1\\
1&0&1&1&1&1&0\\
1&0&0&0&1&1&0\\
0&1&0&1&0&0&1\\
1&1&1&1&0&0&0\\
0&0&0&0&0&0&1\\
\end{smallmatrix}\right)
,
\left(\begin{smallmatrix}
0&0&1&0&1&1&1\\
0&1&1&1&0&0&1\\
0&0&0&1&0&0&1\\
0&1&1&1&1&0&0\\
0&0&0&0&1&0&0\\
1&1&0&0&0&0&1\\
0&1&0&1&1&0&0\\
\end{smallmatrix}\right)
\right\rangle
&&Z_{4} \times Z_{2}\end{flalign*}

\begin{flalign*}
G_{8,4}
=
\left\langle
\left(\begin{smallmatrix}
1&0&0&1&1&1&1\\
1&1&0&1&0&1&0\\
1&1&1&0&0&1&1\\
1&1&0&0&0&0&0\\
1&0&1&0&0&1&1\\
0&0&1&1&0&1&1\\
0&1&0&0&0&1&1\\
\end{smallmatrix}\right)
,
\left(\begin{smallmatrix}
0&0&0&1&1&1&0\\
1&1&0&0&0&0&1\\
0&0&0&0&0&1&0\\
0&1&0&1&1&1&0\\
0&0&0&0&1&0&0\\
1&1&0&1&1&1&0\\
0&1&1&0&0&0&1\\
\end{smallmatrix}\right)
,
\left(\begin{smallmatrix}
1&0&0&0&1&0&0\\
1&0&1&1&1&1&0\\
1&1&0&1&1&1&0\\
0&1&0&0&1&0&1\\
0&0&0&0&1&0&0\\
0&1&0&1&0&1&1\\
1&0&1&0&0&1&0\\
\end{smallmatrix}\right)
\right\rangle
&&Q_{8}\end{flalign*}

\begin{flalign*}
G_{8,5}
=
\left\langle
\left(\begin{smallmatrix}
0&0&1&0&0&1&1\\
1&1&0&0&0&0&0\\
0&0&0&0&1&1&1\\
1&0&1&0&1&0&1\\
1&1&1&1&1&1&0\\
1&0&1&1&0&1&1\\
1&1&0&0&1&1&0\\
\end{smallmatrix}\right)
,
\left(\begin{smallmatrix}
0&0&0&1&1&1&0\\
0&1&1&1&1&0&0\\
0&1&0&0&0&0&1\\
0&1&1&1&1&0&1\\
1&0&1&0&0&1&1\\
1&0&1&0&1&0&0\\
1&1&1&1&1&1&1\\
\end{smallmatrix}\right)
,
\left(\begin{smallmatrix}
0&1&1&1&0&1&0\\
1&1&1&0&0&1&1\\
1&0&0&0&0&1&1\\
0&0&0&1&1&0&0\\
0&0&0&0&1&0&0\\
1&1&1&1&1&0&0\\
1&0&1&0&1&1&0\\
\end{smallmatrix}\right)
\right\rangle
&&Q_{8}\end{flalign*}

\begin{flalign*}
G_{8,6}
=
\left\langle
\left(\begin{smallmatrix}
1&1&0&1&1&0&1\\
1&1&1&1&1&0&1\\
1&1&0&0&0&0&0\\
0&1&0&1&0&1&1\\
0&1&0&1&0&0&1\\
0&0&0&1&1&0&0\\
1&0&1&0&0&1&0\\
\end{smallmatrix}\right)
,
\left(\begin{smallmatrix}
0&0&0&1&1&1&0\\
0&1&0&1&1&1&1\\
0&0&0&0&1&1&0\\
1&0&1&0&0&0&0\\
0&0&1&1&1&0&1\\
0&0&0&1&1&0&1\\
1&0&0&1&1&1&1\\
\end{smallmatrix}\right)
,
\left(\begin{smallmatrix}
1&1&0&1&1&0&1\\
0&0&1&0&0&0&0\\
0&1&0&0&0&0&0\\
0&0&0&1&1&0&0\\
0&0&0&0&1&0&0\\
0&1&0&1&0&1&1\\
0&1&1&0&1&0&1\\
\end{smallmatrix}\right)
\right\rangle
&&D_{8}\end{flalign*}

\begin{flalign*}
G_{8,7}
=
\left\langle
\left(\begin{smallmatrix}
0&0&1&0&0&1&1\\
1&1&0&0&0&0&0\\
1&1&0&0&1&0&0\\
1&0&0&1&1&0&0\\
0&1&1&0&0&0&0\\
0&0&0&1&1&0&0\\
0&1&1&0&0&0&1\\
\end{smallmatrix}\right)
,
\left(\begin{smallmatrix}
0&1&0&0&0&1&1\\
1&1&0&0&0&0&1\\
1&1&1&1&1&0&0\\
0&0&0&0&1&1&1\\
0&1&0&1&0&0&1\\
1&0&0&0&0&1&1\\
1&1&0&0&0&1&0\\
\end{smallmatrix}\right)
,
\left(\begin{smallmatrix}
1&0&1&1&0&0&1\\
1&1&1&0&0&1&1\\
1&0&0&0&0&1&1\\
1&1&0&1&1&1&1\\
0&0&0&0&1&0&0\\
1&1&1&1&1&0&0\\
0&1&1&0&1&0&1\\
\end{smallmatrix}\right)
\right\rangle
&&Z_{4} \times Z_{2}\end{flalign*}

\begin{flalign*}
G_{8,8}
=
\left\langle
\left(\begin{smallmatrix}
1&0&0&1&1&1&1\\
1&1&0&1&0&1&0\\
1&1&1&0&0&1&1\\
1&1&0&0&0&0&0\\
1&0&1&0&0&1&1\\
0&0&1&1&0&1&1\\
0&1&0&0&0&1&1\\
\end{smallmatrix}\right)
,
\left(\begin{smallmatrix}
0&0&0&1&0&1&0\\
0&1&0&1&1&1&1\\
1&0&0&1&1&0&0\\
1&1&0&0&0&0&0\\
0&0&0&0&1&0&0\\
0&1&0&0&1&0&0\\
0&1&1&0&0&0&1\\
\end{smallmatrix}\right)
,
\left(\begin{smallmatrix}
1&0&0&0&1&0&0\\
1&0&1&1&1&1&0\\
1&1&0&1&1&1&0\\
0&1&0&0&1&0&1\\
0&0&0&0&1&0&0\\
0&1&0&1&0&1&1\\
1&0&1&0&0&1&0\\
\end{smallmatrix}\right)
\right\rangle
&&Z_{4} \times Z_{2}\end{flalign*}

\begin{flalign*}
G_{8,9}
=
\left\langle
\left(\begin{smallmatrix}
0&0&1&0&0&1&1\\
1&1&1&0&0&1&1\\
1&1&0&1&1&1&0\\
1&0&0&0&0&1&0\\
0&1&0&1&0&0&1\\
0&0&1&1&0&1&1\\
0&1&1&0&1&0&1\\
\end{smallmatrix}\right)
,
\left(\begin{smallmatrix}
0&1&1&0&0&0&1\\
0&1&0&1&0&1&1\\
0&0&0&0&0&1&0\\
1&1&1&0&0&1&0\\
0&0&1&1&1&0&1\\
0&0&1&0&0&0&0\\
1&1&0&1&0&0&1\\
\end{smallmatrix}\right)
,
\left(\begin{smallmatrix}
0&0&0&1&0&1&0\\
1&0&0&0&1&1&1\\
1&1&1&0&1&1&1\\
0&1&1&1&1&0&0\\
0&0&0&0&1&0&0\\
1&1&1&1&1&0&0\\
1&0&1&0&0&1&0\\
\end{smallmatrix}\right)
\right\rangle
&&D_{8}\end{flalign*}

\begin{flalign*}
G_{8,10}
=
\left\langle
\left(\begin{smallmatrix}
1&0&0&0&0&0&0\\
0&0&1&0&1&0&0\\
0&0&0&1&0&0&1\\
1&0&0&0&0&1&0\\
0&1&0&1&0&0&1\\
1&0&0&1&0&0&0\\
1&0&1&0&0&1&0\\
\end{smallmatrix}\right)
,
\left(\begin{smallmatrix}
1&0&0&1&0&1&1\\
1&1&0&0&1&0&1\\
1&0&0&1&1&0&0\\
1&0&0&0&1&1&0\\
0&0&1&1&1&0&1\\
1&0&1&1&1&1&0\\
0&0&1&0&0&1&1\\
\end{smallmatrix}\right)
,
\left(\begin{smallmatrix}
0&0&0&1&0&1&0\\
1&0&0&0&1&1&1\\
1&1&1&0&1&1&1\\
0&1&1&1&1&0&0\\
0&0&0&0&1&0&0\\
1&1&1&1&1&0&0\\
1&0&1&0&0&1&0\\
\end{smallmatrix}\right)
\right\rangle
&&D_{8}\end{flalign*}

\begin{flalign*}
G_{8,11}
=
\left\langle
\left(\begin{smallmatrix}
0&0&1&1&1&0&0\\
0&1&0&0&0&0&1\\
0&0&1&0&1&0&1\\
1&1&0&0&1&0&1\\
0&1&0&1&0&0&1\\
1&1&1&1&1&0&1\\
0&1&0&0&0&1&1\\
\end{smallmatrix}\right)
\right\rangle
&&Z_{8}\end{flalign*}

\begin{flalign*}
G_{9,1}
=
\left\langle
\left(\begin{smallmatrix}
1&0&1&1&0&1&0\\
1&0&1&1&1&0&0\\
0&0&1&1&1&0&0\\
1&1&0&0&1&1&0\\
1&1&0&1&1&0&0\\
0&1&0&0&0&1&0\\
0&0&0&0&0&0&1\\
\end{smallmatrix}\right)
\right\rangle
&&Z_{9}\end{flalign*}

\begin{flalign*}
G_{9,2}
=
\left\langle
\left(\begin{smallmatrix}
0&1&0&0&0&1&0\\
1&0&0&0&0&1&0\\
0&1&1&0&0&1&0\\
1&1&0&0&1&0&0\\
1&1&0&1&1&0&0\\
0&1&0&0&0&0&0\\
0&0&0&0&0&0&1
\end{smallmatrix}\right)
,
\left(\begin{smallmatrix}
0&1&0&0&0&1&0\\
1&0&1&0&0&0&0\\
1&0&1&0&0&1&0\\
0&1&1&1&1&0&0\\
0&0&0&1&0&1&0\\
0&1&1&0&0&1&0\\
0&0&0&0&0&0&1
\end{smallmatrix}\right)
\right\rangle
&&Z_{3} \times Z_{3}\end{flalign*}

\begin{flalign*}
G_{12,1}
=
\left\langle
\left(\begin{smallmatrix}
1&0&0&0&0&1&1\\
0&0&0&1&1&0&1\\
1&1&1&1&1&0&0\\
1&1&0&0&1&1&0\\
0&0&0&0&0&0&1\\
0&0&0&0&1&1&1\\
0&0&0&0&1&0&0
\end{smallmatrix}\right)
,
\left(\begin{smallmatrix}
1&0&0&0&0&0&0\\
1&1&0&0&0&1&1\\
1&0&1&0&1&0&1\\
1&0&0&1&0&0&0\\
0&0&0&0&1&0&0\\
0&0&0&0&0&1&0\\
0&0&0&0&0&0&1
\end{smallmatrix}\right)
,
\left(\begin{smallmatrix}
1&0&0&0&0&1&1\\
0&1&0&1&1&1&1\\
1&0&1&1&1&0&0\\
1&1&0&0&0&1&1\\
1&0&0&0&1&0&0\\
1&0&0&0&0&1&0\\
0&0&0&0&0&1&0
\end{smallmatrix}\right)
\right\rangle
&&Z_{3} \rtimes Z_{4}\end{flalign*}

\begin{flalign*}
G_{14,1}
=
\left\langle
\left(\begin{smallmatrix}
0&1&1&1&1&0&0 \\
0&1&1&0&0&0&0 \\
0&1&1&0&1&0&0 \\
0&1&0&0&0&0&0 \\
0&0&1&0&1&1&0 \\
1&0&1&0&0&1&0 \\
0&0&0&0&0&0&1
\end{smallmatrix}\right)
\right\rangle
&&Z_{14}\end{flalign*}

\begin{flalign*}
G_{16,1}
=
\left\langle
\left(\begin{smallmatrix}
0&0&1&0&1&0&0\\
1&0&0&0&1&0&0\\
0&0&0&1&0&1&0\\
0&1&0&0&0&0&1\\
1&0&1&0&1&1&1\\
1&0&1&0&0&1&0\\
0&0&1&1&1&1&1\\
\end{smallmatrix}\right)
,
\left(\begin{smallmatrix}
0&0&1&1&0&1&1\\
1&0&1&1&1&0&1\\
0&1&1&1&1&1&0\\
0&0&1&1&0&1&0\\
1&1&1&1&0&1&0\\
1&0&0&1&0&1&1\\
0&0&1&0&0&0&0\\
\end{smallmatrix}\right)
\right\rangle
&&(Z_{4} \times Z_{2}) \rtimes Z_{2}\end{flalign*}


\normalsize

\section{The code of size 333 in the binary Fano setting}
\label{sec_app_333}

The code of size 333 is printed below.
Since the group $G_{4,6}$ of Appendix~\ref{sec_app_survivinggroups} is its automorphism group we print only one representative in each orbit.
The orbit type is $1^9 2^{26} 4^{68}$.
Each row represents one subspace and each number represents a column in the reduced row echelon form matrix corresponding to the subspace by multiplying the entries in the column with powers of $2$: $\begin{smallmatrix}a\\b\\c\end{smallmatrix} \leftrightarrow a \cdot 2^0 + b \cdot 2^1 + c \cdot 2^2$.
For example, the first line in the representatives of order~$4$, i.e., $0102004$, is the orbit of subspaces:
\[
\left(
\operatorname{im}
\left(
\begin{smallmatrix}
0&1&0&0&0&0&0\\
0&0&0&1&0&0&0\\
0&0&0&0&0&0&1\\
\end{smallmatrix} \right) \right) \cdot G_{4,6}
\]

9~fixed blocks:\\

{
\footnotesize
\renewcommand{\arraystretch}{0.8}
\setlength{\tabcolsep}{3pt}
\begin{supertabular}{lllllll}
0&1&2&4&4&1&2\\
1&0&1&2&4&6&0\\
1&1&2&4&6&3&3\\
1&2&0&4&6&0&1\\
1&2&1&3&4&5&7\\
1&2&1&4&4&2&5\\
1&2&2&4&7&1&3\\
1&2&4&0&0&2&0\\
1&2&4&2&7&7&0\\
\end{supertabular}
}\\

26~representatives of orbits of length 2:\\

{
\footnotesize
\renewcommand{\arraystretch}{0.8}
\setlength{\tabcolsep}{3pt}
\begin{supertabular}{lllllll}
0&1&0&2&1&4&0\\
1&0&2&4&4&5&3\\
1&1&1&2&4&3&4\\
1&1&2&2&1&2&4\\
1&1&2&3&3&4&6\\
1&2&0&4&5&7&1\\
1&2&1&0&4&1&0\\
1&2&1&1&4&6&0\\
1&2&1&2&4&7&3\\
1&2&1&4&3&3&6\\
1&2&3&0&4&2&6\\
1&2&4&1&1&1&6\\
1&2&4&2&3&7&5\\
1&2&4&2&4&1&5\\
1&2&4&2&5&7&7\\
1&2&4&3&3&4&5\\
1&2&4&3&4&2&2\\
1&2&4&3&7&7&4\\
1&2&4&4&1&0&5\\
1&2&4&4&1&6&4\\
1&2&4&4&2&2&5\\
1&2&4&5&1&3&0\\
1&2&4&5&3&4&6\\
1&2&4&5&5&0&5\\
1&2&4&5&7&7&5\\
1&2&4&6&3&5&7\\
\end{supertabular}
}\\

68~representatives of orbits of length 4:\\

{
\footnotesize
\renewcommand{\arraystretch}{0.8}
\setlength{\tabcolsep}{3pt}
\begin{supertabular}{lllllll}
0&1&0&2&0&0&4\\
0&1&0&2&4&6&7\\
0&1&1&0&2&2&4\\
0&1&1&1&2&4&0\\
0&1&1&2&0&3&4\\
0&1&2&0&2&4&0\\
0&1&2&1&4&5&7\\
0&1&2&2&2&4&1\\
0&1&2&2&3&4&4\\
0&1&2&4&1&6&1\\
0&1&2&4&4&3&5\\
0&1&2&4&4&7&3\\
1&0&0&2&1&4&6\\
1&0&0&2&3&4&2\\
1&0&0&2&4&2&7\\
1&0&1&2&4&1&3\\
1&0&2&0&4&6&7\\
1&0&2&1&0&3&4\\
1&0&2&1&2&4&7\\
1&0&2&4&3&5&5\\
1&0&2&4&4&4&6\\
1&1&0&2&2&0&4\\
1&1&0&2&4&5&2\\
1&1&2&1&4&3&0\\
1&1&2&2&4&0&5\\
1&1&2&4&2&1&0\\
1&1&2&4&2&3&1\\
1&2&0&0&3&1&4\\
1&2&0&2&2&4&6\\
1&2&0&2&4&2&2\\
1&2&0&3&4&1&3\\
1&2&1&0&3&2&4\\
1&2&1&0&4&7&5\\
1&2&1&1&4&1&5\\
1&2&1&2&1&4&2\\
1&2&1&4&0&2&6\\
1&2&1&4&5&0&7\\
1&2&2&0&4&3&3\\
1&2&2&4&2&1&7\\
1&2&2&4&6&0&5\\
1&2&3&1&4&6&5\\
1&2&3&4&2&4&1\\
1&2&3&4&4&1&3\\
1&2&3&4&6&1&0\\
1&2&4&0&2&6&6\\
1&2&4&0&4&1&6\\
1&2&4&1&1&5&7\\
1&2&4&1&2&6&5\\
1&2&4&1&5&3&3\\
1&2&4&2&4&3&0\\
1&2&4&2&6&7&2\\
1&2&4&3&5&4&4\\
1&2&4&3&7&2&7\\
1&2&4&4&0&6&7\\
1&2&4&4&3&4&3\\
1&2&4&4&4&0&1\\
1&2&4&4&6&0&6\\
1&2&4&5&1&2&2\\
1&2&4&5&3&1&1\\
1&2&4&5&6&6&3\\
1&2&4&6&0&5&0\\
1&2&4&6&0&7&3\\
1&2&4&6&1&3&4\\
1&2&4&6&2&4&0\\
1&2&4&6&5&1&7\\
1&2&4&7&0&0&7\\
1&2&4&7&4&0&4\\
1&2&4&7&7&5&4\\
\end{supertabular}
}\\

\end{document}